\documentclass[12pt]{amsart}
\usepackage{amsfonts,amssymb,amscd,amsmath,enumerate,verbatim}
\usepackage[latin1]{inputenc}
\usepackage{amscd}
\usepackage{latexsym}
\usepackage{multicol}
\usepackage[dvipdfmx]{graphicx}
\input xy

\xyoption{all}
% \usepackage{showkeys}

%
%------    GENERAL MACROS    -----
%
% Standard rings and fields, affine and projective space
%
\def\NZQ{\mathbb}               % the font for N,Z,Q,R,C
\def\NN{{\NZQ N}}

%
%------------------------------------------------
% Symbols in "Fraktur"
%
\def\frk{\mathfrak}               % font for "Fraktur"

\def\Phi{{\frk N}}
%
%------------------------------------------------
% Small letters in bold
%

%---------------------------------------------------
% Greek letters

%
\def\opn#1#2{\def#1{\operatorname{#2}}} % to make operators
%------------------------------------------------
% Numerical invariants of rings, ideals, and modules
%
\opn\chara{char} 
\opn\length{\ell} 
\opn\pd{pd} 
\opn\rk{rk}
\opn\projdim{proj\,dim} 
\opn\injdim{inj\,dim} 
\opn\rank{rank}
\opn\depth{depth} 
\opn\grade{grade} 
\opn\height{height}
\opn\embdim{emb\,dim} 
\opn\codim{codim}

\opn\Tr{Tr} 
\opn\bigrank{big\,rank}
\opn\superheight{superheight}
\opn\lcm{lcm}
\opn\trdeg{tr\,deg}%\emph{
\opn\reg{reg} 
\opn\lreg{lreg} 
\opn\ini{in} 
\opn\lpd{lpd}
\opn\size{size}
\opn\mult{mult}
\opn\dist{dist}
\opn\cone{cone}
\opn\lex{lex}
\opn\rev{rev}
%------------------------------------------------
% Divisors
%
\opn\div{div} \opn\Div{Div} \opn\cl{cl} \opn\Cl{Cl}
%
%------------------------------------------------
% Subsets of the spectrum of a ring
%
\opn\Spec{Spec} \opn\Supp{Supp} \opn\supp{supp} \opn\Sing{Sing}
\opn\Ass{Ass} \opn\Min{Min}
%
%------------------------------------------------
% Standard operations on ideals and modules
%
\opn\Ann{Ann} \opn\Rad{Rad} \opn\Soc{Soc}
%
%------------------------------------------------
% Linear algebra and homology, endo- and automorphisms
%
\opn\Syz{Syz} \opn\Im{Im} \opn\Ker{Ker} \opn\Coker{Coker}
\opn\Am{Am} \opn\Hom{Hom} \opn\Tor{Tor} \opn\Ext{Ext}
\opn\End{End} \opn\Aut{Aut} \opn\id{id} \opn\ini{in}

\opn\nat{nat}
\opn\pff{pf}%   \pf exists already
\opn\Pf{Pf} \opn\GL{GL} \opn\SL{SL} \opn\mod{mod} \opn\ord{ord}
\opn\Gin{Gin}
\opn\Hilb{Hilb}\opn\adeg{adeg}\opn\std{std}\opn\ip{infpt}
\opn\Pol{Pol}
\opn\sat{sat}
\opn\Var{Var}
\opn\Gen{Gen}

%
%------------------------------------------------
% Convexity
%
\opn\aff{aff} \opn\con{conv} \opn\relint{relint} \opn\st{st}
\opn\lk{lk} \opn\cn{cn} \opn\core{core} \opn\vol{vol}
\opn\link{link} \opn\star{star}
%------------------------------------------------
% Graded rings and Rees algebras
\opn\gr{gr}

%Tonys commands
\def\Ac{{\mathcal A}}
\def\Bc{{\mathcal B}}

\def\Rc{{\mathcal R}}

%
%------------------------------------------------
% Polynomials and power series
%

\def\pot#1#2{#1[\kern-0.28ex[#2]\kern-0.28ex]}

%
%------------------------------------------------
% Direct and inverse limits
%
\opn\dirlim{\underrightarrow{\lim}}
\opn\inivlim{\underleftarrow{\lim}}
%
%
% Names with a meaning
%

\let\Dirsum=\bigoplus

%
%------------------------------------------------
%
\let\to=\rightarrow

\def\Implies{\ifmmode\Longrightarrow \else
        \unskip${}\Longrightarrow{}$\ignorespaces\fi}
\def\implies{\ifmmode\Rightarrow \else
        \unskip${}\Rightarrow{}$\ignorespaces\fi}
\def\iff{\ifmmode\Longleftrightarrow \else
        \unskip${}\Longleftrightarrow{}$\ignorespaces\fi}

\let\:=\colon
\newtheorem{Theorem}{Theorem}[section]

\newtheorem{Corollary}[Theorem]{Corollary}

%
% We like the var forms of some greek letters (as taught in German schools)
%
\let\epsilon\varepsilon
\let\phi=\varphi
\let\kappa=\varkappa
%
%           We print on A4 paper
%
\textwidth=15cm \textheight=22cm \topmargin=0.5cm
\oddsidemargin=0.5cm \evensidemargin=0.5cm \pagestyle{plain}
%
%           The pf environment of AMSART needs a little help
%
\def\qed{\ifhmode\textqed\fi
      \ifmmode\ifinner\quad\qedsymbol\else\dispqed\fi\fi}
\def\textqed{\unskip\nobreak\penalty50
       \hskip2em\hbox{}\nobreak\hfil\qedsymbol
       \parfillskip=0pt \finalhyphendemerits=0}
\def\dispqed{\rlap{\qquad\qedsymbol}}

%
% ------    END OF GENERAL MACROS    -------
\opn\dis{dis}
\opn\height{height}
\opn\dist{dist}
\opn\supp{supp}
\def\pnt{{\raise0.5mm\hbox{\large\bf.}}}

\opn\Lex{Lex}

%
%-- macro for local cohomology-----------------------------

%-- macro for a complicated condition for the extended
%-- Hochster's formula

\begin{document}
	
\title{Ideals with componentwise linear powers}

	\author {Takayuki Hibi and Somayeh Moradi}

\address{Takayuki Hibi, Department of Pure and Applied Mathematics,
	Graduate School of Information Science and Technology, Osaka
	University, Suita, Osaka 565-0871, Japan}
\email{hibi@math.sci.osaka-u.ac.jp}

\address{Somayeh Moradi, Department of Mathematics, Faculty of Science, Ilam University,
	P.O.Box 69315-516, Ilam, Iran}
\email{so.moradi@ilam.ac.ir}

\dedicatory{ }
\keywords{Rees algebra, cover ideal of a graph, componentwise linear ideal}
\subjclass[2010]{Primary 13A02; 13P10, Secondary 05E40}
%\thanks{The second author was supported by JSPS KAKENHI 19H00637.}
\thanks{Takayuki Hibi is partially supported by JSPS KAKENHI 19H00637. Somayeh Moradi is supported by the Alexander von Humboldt Foundation.}

\begin{abstract}
Let $S=K[x_1,\ldots,x_n]$ be the polynomial ring over a field $K$, and let $A$ be a finitely generated standard graded $S$-algebra. We show that if the defining ideal of $A$ has a quadratic initial ideal, then all the graded components of $A$ are componentwise linear. Applying this result to the Rees ring $\mathcal{R}(I)$ of a graded ideal $I$ gives a criterion on $I$ to have componentwise linear powers.  Moreover, for any given graph $G$, a construction on $G$ is presented  which produces graphs whose cover ideals $I_G$ have componentwise linear powers. This in particular implies that for any Cohen-Macaulay Cameron-Walker graph $G$ all powers of $I_G$ have linear resolutions. Moreover, forming a cone on special graphs like unmixed chordal graphs, path graphs and Cohen-Macaulay bipartite graphs  produces cover ideals with componentwise linear powers.
\end{abstract}

\maketitle

\setcounter{tocdepth}{1}
%\tableofcontent		
\section*{Introduction}	

Componentwise linear ideals were first introduced by Herzog and the first author of this paper in \cite{HH1}. Since their introduction
they have emerged as an intriguing class of ideals deserving special attention, due to some of their interesting characterizations in combinatorics and commutative algebra, see  \cite{AHH1,HV,HRW,R1,Y}. One research theme in this context is to find ideals whose all powers are componentwise linear or have linear resolutions. 
Let $S=K[x_1,\ldots,x_n]$ be the polynomial ring over a field $K$. A graded ideal $I\subset S$  is called {\em componentwise linear} if for each integer $j$ the ideal generated by all homogeneous elements of degree $j$ in $I$ has a linear resolution. 
%We refer the interested reader to \cite{HV} which surveyed the recent results on componentwise linear property for cover ideals of graphs and their powers. 
%Let $I\subset S$ be a graded ideal.
In this paper we mainly consider the question: what hypotheses on $I$  ensure that all powers of $I$ are componentwise linear? In particular when $I$ is the cover ideal of a graph $G$, what graph constructions lead to cover ideals with componentwise linear powers? 
To investigate this question it is natural to consider the  Rees algebra $\mathcal{R}(I)$. For a graded ideal $I\subset S$ let $J\subset T=S[y_1,\ldots,y_m]$ be the defining ideal of $\mathcal{R}(I)$. The ideal $J$ is said to satisfy the $x$-condition with respect to a monomial order $<$ on $T$ if any minimal monomial generator of $\ini_<(J)$
is of the form $vw$ with $v\in S$ of degree at most one and $w\in K[y_1,\ldots,y_m]$.
This property was first defined in \cite{HHZ}.
When $J$ has the $x$-condition property, then it is proved in 
\cite[Corollary 1.2]{HHZ} that if $I$ is equigenerated, then each power of $I$ has a linear resolution. It is natural to ask when $I$ is not equigenerated,  how the $x$-condition affects the powers of $I$. This is considered in \cite{HHM}, where it is shown that if $J$ satisfies the $x$-condition with respect to some special monomial order, %which is induced by some monomial orders on $K[y_1,\ldots,y_m]$ and $S$,
 then $I^k$ has linear quotients with respect to some set of generators for all $k$, which may not be minimal. But once it is minimal, it implies the componentwise linearity of $I^k$, see \cite[Theorem 8.2.15]{HH}. %So the next step is to investigate the conditions that implies the minimality of the mentioned generating set. 
When $I$ is a monomial ideal and  $\ini_<(J)$ is generated by quadratic monomials,  it is shown  in \cite[Theorem 3.6]{HHM} that this set of generators is minimal and hence all powers of $I$ are componentwise linear. In the first section of this paper we extend this result to any graded ideal $I$, see Theorem \ref{shameo} and Corollary \ref{extend}. 

The question of having linear or componentwise linear powers has attracted special attention for the ideals arising from graphs and it has been studied in several papers.   
Some families of such ideals whose powers inherit componentwise linear or linear property are
cover ideals of Cohen-Macaulay bipartite graphs \cite{HHbi},  unmixed chordal graphs \cite[Theorem 2.7]{HHO}, chordal graphs that are  $(C_4, 2K_2)$-free \cite[Theorem 3.7]{N},  trees  \cite[ Corollary 3.5]{KK},
path graphs, biclique graphs and Cameron-Walker graphs whose bipartite graph is a complete bipartite graph  \cite[Corollary 4.7]{HHM} and edge ideals with linear resolutions \cite[Theorem 3.2]{HHZ}. In Section 2 of this paper we consider a construction on a graph $G$  denoted by  $G(H_1, \ldots, H_n)$ which attaches to each vertex $x_i$ of $G$ a graph $H_i$.   As the main result we show in Theorem \ref{construction} that if for each $i$, the defining ideal $J_{H_i}$ of $\mathcal{R}(I_{H_i})$ 
has a quadratic Gr\"obner basis, then each power of the vertex cover ideal $I_{G(H_1, \ldots, H_n)}$ is componentwise linear. To this aim we first show in Theorem \ref{graphjoin}
that if each $J_{H_i}$ satisfies the $x$-condition , then $J_{G(H_1, \ldots, H_n)}$ satisfies the $x$-condition with respect to some monomial order.
Cohen-Macaulay Cameron-Walker graph and cone graphs are examples of such constructions.

\section{Algebras with componentwise linear graded components}	
Let $K$ be a field and let $A=\Dirsum_{i,j}A_{(i,j)}$ be a bigraded $K$-algebra with $A_{(0,0)}=K$. Set $A_j=\Dirsum_iA_{(i,j)}$. We assume that $A_0$ is the polynomial ring $S= K[x_1,\ldots,x_n]$ with the standard grading.
Then $A=\Dirsum_jA_j$ is a graded $S$-algebra,   and each $A_j$ is a graded $S$-module with grading  $(A_j)_i=A_{(i,j)}$ for all $i$.
We assume in addition that $A$  is a finitely generated standard graded $S$-algebra and $(A_1)_i=0$ for $i<0$.

We fix a system of homogeneous generators $f_1,\ldots,f_m$ of $A_1$ with $\deg f_i=d_i$ for $i=1,\ldots,m$.
% and assume that $d_1\leq d_2\leq \cdots \leq d_m$.
Let $T=K[x_1,\ldots,x_n,y_1,\ldots,y_m]$  be the bigraded polynomial ring   over $K$ with the grading induced by 
$\deg x_i=(1,0)$ for $i=1,\ldots,n$ and $\deg y_j=(d_j,1)$  for  $j=1,\ldots,m$.
Define the $K$-algebra homomorphism $\varphi\: T\to A$ with $\varphi(x_i)=x_i$ for $i=1,\ldots,n$ and $\varphi(y_j)=f_j$ for $j=1,\ldots, m$.  Then $\varphi$ is a surjective $K$-algebra homomorphism of bigraded $K$-algebras, and hence  $J=\Ker(\varphi)$ is a bigraded ideal in $T$.
The defining ideal $J$ of $A$ is said to satisfy the {\em  $x$-condition}, with respect to a monomial order $<$ on $T$  if all $u\in \mathcal{G}(\ini_<(J))$ are of the form $vw$ with $v\in S$ of degree $\leq 1$ and
$w\in K[y_1,\ldots,y_m]$. Here $\mathcal{G}(I)$ denotes the minimal set of monomial generators of a monomial ideal $I$.

Given a monomial  order  $<'$ on the polynomial ring $K[y_1,\ldots,y_m]$  and a monomial order $<_x$ on $K[x_1,\ldots,x_n]$, let $<$ be a monomial order on $T$ such that

\begin{eqnarray}
	\label{monomialorder}
	\prod_{i=1}^n x_i^{a_i}\prod_{i=1}^m y_i^{b_i} &<& \prod_{i=1}^n x_i^{a'_i} \prod_{i=1}^m y_i^{b'_i},
\end{eqnarray}
if
\begin{eqnarray*}
	\prod_{i=1}^m y_i^{b_i} <'\prod_{i=1}^m y_i^{b'_i}\quad \text{or}\quad  \prod_{i=1}^m y_i^{b_i}&=&\prod_{i=1}^m y_i^{b'_i}\quad  \text{and} \quad
	\prod_{i=1}^n x_i^{a_i} <_x \prod_{i=1}^n x_i^{a'_i}.
\end{eqnarray*}
We call $<$ the order induced by the orders $<'$ and  $<_x$.

A graded $S$-module $M$ has {\em linear quotients},  if there exists a system of homogeneous generators $f_1,\ldots,f_m$  of $M$ with the property that each of the colon ideals $(f_1,\ldots,f_{j-1}):f_j$ is generated by linear forms. 
With the above notation and terminology we have

%We say that $I$ satisfies the $x$-condition, when the defining ideal of the Rees ring $\mathcal{R}(I)$ satisfies the $x$-condition with respect to $<$.

\begin{Theorem}
	\label{shameo}
	Let $J$ be the defining ideal of the $K$-algebra $A$. If $\ini_{<}(J)$ is generated by quadratic monomials, then for all $k\geq 1$, the $S$-module $A_k$ has linear quotients with respect to a minimal generating set and hence it is componentwise linear.
\end{Theorem}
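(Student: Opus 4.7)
My strategy is to identify a generating set of $A_k$ via standard $y$-monomials and verify linear quotients from the quadratic structure of $\ini_<(J)$. Let $B_k$ be the set of standard $y$-monomials of $y$-degree $k$ modulo $\ini_<(J)$. Since the generators of $\ini_<(J)$ are quadratic of the form $y_iy_j$ or $x_ky_i$, every standard monomial of $T$ of $y$-degree $k$ factors as $x^\alpha w$ with $w \in B_k$, so $\{\varphi(w) : w \in B_k\}$ generates $A_k$ as an $S$-module. Order $B_k = \{w_1 >' w_2 >' \cdots >' w_r\}$ by $<'$ and set $f_j := \varphi(w_j)$.

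\textbf{Linear quotients.} For each $j$, I claim the colon $(f_1,\ldots,f_{j-1}):f_j$ is generated by linear forms in $S$, specifically by those $x_k$ with $x_ky_i \in \ini_<(J)$ for some $y_i \mid w_j$. Given such $x_k$ and $y_i$, choose a Gr\"obner basis element $g \in J$ with $\ini_<(g) = x_ky_i$; by bihomogeneity, $g = x_ky_i - \sum_\alpha c_\alpha u_\alpha$ with $u_\alpha = x^{\beta_\alpha} y_{j_\alpha} < x_ky_i$. Multiplying by $w_j/y_i$ and applying $\varphi$ yields
\[
x_k f_j = \sum_\alpha c_\alpha \, \varphi(u_\alpha \cdot w_j/y_i).
\]
Each term on the right matches the bidegree of $x_k w_j$, and by induced-order priority ($<'$ first), the $y$-part $y_{j_\alpha} w_j/y_i$ is either $<'$-smaller than $w_j$ (contributing to $(f_1,\ldots,f_{j-1})$ after reducing via the pure-$y$ quadratic generators) or equal to $w_j$---the latter only when $y_{j_\alpha} = y_i$, forcing $x^{\beta_\alpha} <_x x_k$, so an induction on $<_x$ resolves the self-multiple terms.

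\textbf{Minimality and conclusion.} Extract $B'_k := \{w \in B_k : \varphi(w) \notin \mm A_k\}$; by Nakayama this indexes a minimal generating set of $A_k$. For each $w \in B_k \setminus B'_k$, the redundancy $\varphi(w) \in \mm A_k$ is witnessed by a Gr\"obner-based identity $\varphi(w) = x_i \varphi(w')$ with $w' \in B_k$ of strictly smaller bidegree---a consequence of the quadratic hypothesis, which precludes higher-degree cascades by confining every reduction step to quadratic generators. This lets us substitute redundant $f_w$'s into the linear-form colon ideals established above, yielding linear quotients with respect to the minimal generating set indexed by $B'_k$. Componentwise linearity then follows from \cite[Theorem 8.2.15]{HH}. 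The main obstacle is verifying that the Gr\"obner reductions in the linear-quotient step terminate in $S$-multiples of earlier generators without contributing non-linear colon elements; the quadratic hypothesis is precisely what prevents higher-degree Gr\"obner generators from arising to spoil this.
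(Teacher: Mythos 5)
Your identification of the generating set (images of the standard pure $y$-monomials of $y$-degree $k$) and your use of the $x$-condition to get linear quotients with respect to that ordered set both match the paper, which simply cites \cite[Lemma 3.2]{HHM} and \cite[Theorem 2.3]{HHM} for these two facts (your sketch of the colon-ideal computation only verifies one inclusion, but that part is citable). The genuine gap is in your third paragraph. The entire content of the theorem beyond the earlier results of \cite{HHM} is that the generating set $\{\varphi(w):w\in B_k\}$ is \emph{already minimal}: linear quotients with respect to a non-minimal generating set does not yield componentwise linearity, which is why \cite[Theorem 8.2.15]{HH} requires minimality. Your strategy of extracting $B_k'=\{w\in B_k:\varphi(w)\notin\mm A_k\}$ and ``substituting the redundant generators into the colon ideals'' does not close this. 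First, even if the images of $B_k'$ span $A_k/\mm A_k$, they need not be linearly independent there, so Nakayama does not hand you a minimal generating set indexed by $B_k'$. Second, linear quotients is not inherited by a sub-collection of the generators after substitution: eliminating a redundant generator can introduce non-linear elements into the colon ideals of the surviving ones, and you give no argument that this does not happen. Your closing sentence concedes exactly this point; the key step is asserted, not proved.

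What the paper actually does is prove that there are no redundancies at all, by contradiction. If $h_j=\sum_{\ell\neq j}p_\ell h_\ell$, one forms $q=h_j^*-\sum_{\ell\neq j}p_\ell h_\ell^*\in J$ with least possible initial term among all such relations. Since $h_j^*$ is standard, $\ini_<(q)=\ini_<(p_i)h_i^*$ is divisible by a quadratic initial term of a Gr\"obner basis element, which must have the form $x_ry_t$ (the form $y_ry_t$ would contradict standardness of $h_i^*$). Writing that element as $x_ry_t-\sum_\ell c_\ell u_\ell y_{j_\ell}$, the minimality of $f_1,\dots,f_m$ as generators of $A_1$ forces every $u_\ell\neq 1$; this degree drop is what guarantees that the rewritten relation avoids reintroducing $h_j^*$ and, after re-expressing any non-standard $(h'_\ell)^*$ via \cite[Lemma 2.2]{HHM}, has strictly smaller initial term than $q$ --- a contradiction. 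None of this machinery (the extremal choice of $q$, the case analysis on the shape of the quadratic leading term, the use of minimality of the degree-one generators to force $u_\ell\neq1$) appears in your proposal, and without it the minimality claim, and hence the theorem, is not established.
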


\begin{proof}
	Let $k\geq 1$ be an integer. For any element $h=f_{i_1}\cdots f_{i_k}\in A_k$, we set
	$h^{*}=y_{i_1}\cdots y_{i_k}$. Let $\{h_1^{*},\ldots,h_{d}^*\}$ be the set of all standard monomials of bidegree $(*,k)$ in $T$ which are of the form $h^*$.  Then by \cite[Lemma~3.2]{HHM}, $h_1,\ldots,h_d$ is a system of generators of $A_k$. We show that this is indeed a minimal system of generators of $A_k$. Since $f_1,\ldots,f_m$ is a minimal generating set of $A_1$, there is nothing to prove for $k=1$. Now, let $k>1$.
	By contradiction suppose that there exists an integer $j$ such that $h_j=\sum_{\ell\neq j}p_{\ell}h_{\ell}$ for some homogeneous polynomials $p_{\ell}\in S$.
	Then $q=h_j^*-\sum_{\ell\neq j}p_{\ell}h_{\ell}^*\in J$.
	Without loss of generality assume that $q$ has the least initial term among all the expressions of the form $h_j^*-\sum_{\ell\neq j}p'_{\ell}h_{\ell}^*\in J$. Since $h_j^*$ is a standard monomial, we have  $\ini_<(q)=\ini_<(p_i)h_i^*\in\ini_{<}(J)$ for some $i\neq j$ with $p_i\neq 1$.
	By assumption there exists an element $g$ in the Gr\"{o}bner basis of $J$ such that $\ini_<(g)$ has degree two and $\ini_<(g)$ divides $\ini_<(p_i)h_i^*$.  If $\ini_<(g)=y_ry_t$ for some $r$ and $t$, then $y_ry_t$ divides $h_i^*$, which implies that $h_i^*\in \ini_<(J)$, a contradiction.
	So $\ini_<(g)=x_ry_t$ for some $r$ and $t$ and then $x_r$ divides $\ini_<(p_i)$ and $y_t$ divides $h_i^*$. Let $g=x_ry_t-\sum_{\ell=1}^s c_{\ell}u_{\ell}y_{j_{\ell}}$ for some $c_{\ell}\in K$ and monomials $u_{\ell}\in S$. Then
	\begin{equation}\label{-1}
		x_rf_t=\sum_{\ell=1}^s c_{\ell}u_{\ell}f_{j_{\ell}}.
	\end{equation} 
	We have $u_{\ell}\neq 1$ for all $\ell$. Indeed, if $u_{\lambda}=1$ for some $\lambda$, then
	\begin{equation}\label{0}
		f_{j_{\lambda}}=c_{\lambda}^{-1}x_rf_t-\sum_{\ell\neq\lambda} c_{\lambda}^{-1}c_{\ell}u_{\ell}f_{j_{\ell}}.
	\end{equation} 
	This is a contradiction, since $f_1,\ldots,f_m$ is a minimal generating set of $A_1$. Thus $u_{\ell}\neq 1$ for all $\ell$.
	We have
	\begin{eqnarray*}
		\ini_<(p_i)h_i=(\ini_<(p_i)/x_r)x_rf_t(h_i/f_t)= \\
		(\ini_<(p_i)/x_r)(\sum_{\ell=1}^s c_{\ell}u_{\ell}f_{j_{\ell}}) (h_i/f_t).
	\end{eqnarray*}
	
	Hence $\ini_<(p_i)h_i=\sum_{\ell=1}^s c_{\ell}w_{\ell}h'_{\ell}$, for the monomials $w_{\ell}=(\ini_<(p_i)/x_r)u_{\ell}\neq 1$ and elements $h'_{\ell}=f_{j_{\ell}}(h_i/f_t)\in A_k$. Let $p'_i=p_i-\ini_<(p_i)$. Then
	\begin{equation}\label{1}
		h_j=(\ini_<(p_i)+p'_i)h_i+\sum_{\ell\neq i,j}p_{\ell}h_{\ell}=\sum_{\ell=1}^s c_{\ell}w_{\ell}h'_{\ell}+p'_ih_i+\sum_{\ell\neq i,j}p_{\ell}h_{\ell}.
	\end{equation}
	
	We show that $h'_{\ell}\neq h_j$ for all $1\leq\ell\leq s$. Indeed, by the equality (\ref{-1}) and that $u_{\ell}\neq 1$, we have $\deg(f_t)\geq \deg(f_{j_{\ell}})$. Hence $\deg(h'_{\ell})\leq \deg(h_i)$. While
	$\deg(h_j)=\deg(p_i)+\deg(h_i)>\deg(h_i)$. Hence $\deg(h_j)>\deg(h'_{\ell})$ which implies that $h'_{\ell}\neq h_j$.
	Also $y_{j_{\ell}}<'y_t$ implies that $(h'_{\ell})^*<'h_i^*$. By~(\ref{1}), we have
	\begin{equation}\label{2}
		q'=h_j^*-\sum_{\ell=1}^s c_{\ell}w_{\ell}(h'_{\ell})^*-p'_ih_i^*-\sum_{\ell\neq i, j}p_{\ell}h_{\ell}^*\in J.
	\end{equation}
	Since $(h'_{\ell})^*\neq h_j^*$ for all $\ell$ and $\ini_<(q')<\ini_<(q)$, by the assumption on $q$ we conclude that there exists at least one integer $1\leq\ell\leq s$ such that the monomial $(h'_{\ell})^*$ is not standard.
	By~\cite[Lemma 2.2]{HHM}, for any such $\ell$ there exist standard monomials $h_{t_{\ell,1}}^*,\ldots,h_{t_{\ell,b}}^*$ with $h_{t_{\ell,1}}^*<\cdots<h_{t_{\ell,b}}^*<(h'_{\ell})^*$ and homogeneous polynomials $v_{\ell,\lambda}$ such that
	\begin{equation}\label{3}
		(h'_{\ell})^*-\sum_{\lambda=1}^b v_{\ell,\lambda}h_{t_{\ell,\lambda}}^*\in J.
	\end{equation}
	Again notice that $j\notin\{t_{\ell,1},\ldots,t_{\ell,b}\}$, since $\deg(h_j)>\deg(h'_\ell)\geq\deg(h_{t_{\ell,\lambda}})$ for any $1\leq \lambda\leq b$. 
	Set $I_1=\{\ell: 1\leq \ell\leq s, \ (h'_{\ell})^*\ \textrm{is not standard}\}$ and $I_2=[s]\setminus I_1$. By (\ref{2}) and (\ref{3}) we obtain an expression
	$$q''=h_j^*-\sum_{\ell\in I_1}\sum_{\lambda=1}^b c_{\ell}w_{\ell}v_{\ell,d}\ h_{t_{\ell,d}}^*-\sum_{\ell\in I_2}c_{\ell}w_{\ell}(h'_{\ell})^*-p'_ih_i^*-\sum_{\ell\neq i, j}p_{\ell}h_{\ell}^*\in J$$ in terms of standard monomials. Since
	$\ini_<(q'')<\ini_<(q)$, we get a contradiction.
	
	Thus $h_1,\ldots,h_d$ is a minimal system of generators of $A_k$. Now using~\cite[Theorem 2.3]{HHM}, we conclude that for each $k$, $A_k$ has linear quotients with respect to its minimal set of generators. Thus it follows from ~\cite[Theorem 8.2.15]{HH} that $A_k$ is componentwise linear.
\end{proof}

Applying Theorem~\ref{shameo} to the Rees algebra of a graded ideal we obtain the following corollary which generalizes \cite[Corollary 10.1.8]{HH} and \cite[Theorem 3.6]{HHM}. 

\begin{Corollary}
	\label{extend}
	Let $I\subset S$ be a graded ideal and let $J$ be the defining ideal of the Rees algebra $\Rc(I)$. If $\ini_{<}(J)$ is generated by quadratic monomials with respect to the monomial order defined in {\em (\ref{monomialorder})}, then for any $k\geq 1$, $I^k$ has linear quotients with respect to its minimal monomial generating set and hence it is componentwise linear.
\end{Corollary}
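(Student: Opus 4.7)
The plan is to reduce directly to Theorem~\ref{shameo} by taking $A = \Rc(I) = \Dirsum_{k\ge 0} I^k$, the Rees algebra of $I$. Fix a minimal homogeneous generating set $f_1,\ldots,f_m$ of $I$ with $\deg f_i = d_i$. Then $\Rc(I)$ is a finitely generated standard graded $S$-algebra with $A_0 = S$, $A_1 = I$, and $\Rc(I) = S[f_1,\ldots,f_m]$, so all of the standing hypotheses of Theorem~\ref{shameo} on $A$ are satisfied. Moreover, the bigraded polynomial ring $T = K[x_1,\ldots,x_n,y_1,\ldots,y_m]$ with $\deg x_i = (1,0)$ and $\deg y_j = (d_j,1)$ surjects onto $\Rc(I)$ via $x_i \mapsto x_i$ and $y_j \mapsto f_j$, and by definition $J$ is precisely this kernel, matching the defining ideal considered in Theorem~\ref{shameo}.

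Next I would observe that the monomial order in~(\ref{monomialorder}) is exactly the kind of order on $T$ induced by an order $<'$ on $K[y_1,\ldots,y_m]$ and an order $<_x$ on $S$ that is considered in Theorem~\ref{shameo}. Hence the hypothesis that $\ini_<(J)$ is generated by quadratic monomials allows direct application of Theorem~\ref{shameo}, yielding that for every $k\ge 1$, $A_k = I^k$ has linear quotients with respect to a minimal generating set and is consequently componentwise linear.

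The only subtle point is the passage from ``minimal generating set'' (as delivered by Theorem~\ref{shameo}) to ``minimal monomial generating set'' in the corollary's statement. This is bookkeeping rather than a real obstacle: the minimal generators produced in the proof of Theorem~\ref{shameo} are the elements $h_j = f_{i_1}\cdots f_{i_k}$ indexed by the standard monomials $y_{i_1}\cdots y_{i_k}$ of bidegree $(*,k)$ in $T/J$, and when $I$ is a monomial ideal these $h_j$ are themselves monomials whose minimality, established inside the proof of Theorem~\ref{shameo}, identifies them with the minimal monomial generating set of $I^k$. Thus there is no genuine obstacle; the corollary is essentially the Rees-algebra specialization of Theorem~\ref{shameo}.
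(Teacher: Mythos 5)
Your proposal is correct and matches the paper exactly: the paper gives no separate argument for Corollary~\ref{extend} beyond the remark that it follows by applying Theorem~\ref{shameo} to $A=\Rc(I)$, with $A_k=I^k$, which is precisely what you do. Your additional remark on reconciling ``minimal generating set'' with the statement's ``minimal monomial generating set'' is a reasonable gloss on a slight imprecision in the corollary's wording (the ideal $I$ there is only assumed graded), not a gap in the argument.
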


\section{cover ideals of graphs with componentwise linear powers}	

Let $G$ be a finite simple graph on the vertex set $V(G) = \{x_1, \ldots, x_n \}$ and let $E(G)$ be the set of edges of $G$.  A subset $C\subseteq V(G)$ is called a \emph{vertex cover} of $G$,
when it intersects any edge of $G$. Moreover, $C$ is called a \emph{minimal vertex cover} of $G$, if it is a vertex cover and no proper subset of $C$ is a vertex cover of $G$. Let as before $S = K[x_1, \ldots, x_n]$ denote the polynomial ring in $n$ variables over a field $K$.  We associate each subset $C \subset V(G)$ with the monomial $u_C = \prod_{x_i \in C} x_i$ of $S$.  Let $C_1, \ldots, C_q$ denote the minimal vertex covers of $G$. The {\em cover ideal} of $G$ is defined as $I_G = (u_{C_1}, \ldots, u_{C_q})$. 
The Rees algebra of $I_G$ is the toric ring
\[
\Rc(I_G) = K[x_1, \ldots, x_n, u_{C_1}t, \ldots, u_{C_q}t] \subset S[t].
\] 
Let $T = S[y_1, \ldots, y_q]$ denote the polynomial ring and define the surjective map $\pi : T \to \Rc(I_G)$ by setting $\pi(x_i) = x_i$ and $\pi(y_j) = u_{C_j}t$.  The toric ideal $J_G \subset T$ of $\Rc(I_G)$ is the kernel of $\pi$.  Let $<_{\rm lex}$ denote the pure lexicographic order on $S$ induced by the ordering $x_1 > \cdots > x_n$ and suppose that $u_{C_q} <_{\rm lex} \cdots <_{\rm lex} u_{C_1}$.  Let $<'_{\rm lex}$ denote the pure lexicographic order on $K[y_1, \ldots, y_q]$ induced by the ordering $y_1 > \cdots > y_q$.  We let $<^\sharp$ be  the monomial order on $T$  which is induced by the orders  $<'_{\rm lex}$ and  $<_{\rm lex}$.  
%\[
%\left(\prod_{i=1}^{n} x_i^{a_i}\right)\left(\prod_{j=1}^{q} y_j^{b_j}\right)
%<^\sharp
%\left(\prod_{i=1}^{n} x_i^{a'_i}\right)\left(\prod_{j=1}^{q} y_j^{b'_j}\right)
%\]    
%if either $$\prod_{j=1}^{q} y_j^{b_j} <'_{\rm lex} \prod_{j=1}^{q} y_j^{b'_j}$$ or $$\prod_{j=1}^{q} y_j^{b_j} = \prod_{j=1}^{q} y_j^{b'_j} \, \, \, \, \, \text{and} \, \, \, \, \, \prod_{i=1}^{n} x_i^{a_i} <_{\rm lex} \prod_{i=1}^{n} x_i^{a'_i}.$$
%We say that $J_G$  satisfies the {\em $x$-condition} with respect to $x_1 > \cdots > x_n$ if the initial ideal ${\rm in}_{<^\sharp}(J_G)$ of $J_G$ with respect to $<^\sharp$ is generated by those monomials $u$ with $\deg_x u \leq 1$.

Given finite simple graphs $H_1, \ldots, H_n$ with $V(H_i) = \{z^{(i)}_1, \ldots, z^{(i)}_{r_i} \}$, we construct the graph $G(H_1, \ldots, H_n)$ on $V(G) \cup V(H_1) \cup \cdots \cup V(H_n)$ whose set of edges is
\[
E(G) \cup E(H_1) \cup \cdots \cup E(H_n) \cup \left(\,\bigcup_{\substack{1 \leq i \leq n \\ 1 \leq j \leq r_i}}\{x_i, z^{(i)}_j \}\right)
\] 

\begin{Theorem}\label{graphjoin}
Suppose that each $J_{H_i}$ satisfies the $x$-condition with respect to $z^{(i)}_1 > \cdots > z^{(i)}_{r_i}$.  Then $J_{G(H_1, \ldots, H_n)}$ satisfies the $x$-condition with respect to 
% \begin{eqnarray}
%\label{ordering}
\[
z^{(1)}_1 > z^{(1)}_2 > \cdots > z^{(1)}_{r_1} > z^{(2)}_{1} > \cdots > z^{(2)}_{r_2} > \cdots > z^{(n)}_{r_n} > x_1 > \cdots > x_n.
\]
%\end{eqnarray} 
\end{Theorem}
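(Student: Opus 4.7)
The plan is to exhibit a Gr\"obner basis of $J_{G(H_1, \ldots, H_n)}$ under the induced order from (\ref{monomialorder}), each of whose elements has initial term of $S$-degree at most one, using as the main input the Gr\"obner bases of the $J_{H_i}$ (which satisfy the $x$-condition by hypothesis) together with further binomials encoding the $G$-structure.

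First, I would describe the minimal vertex covers of $G(H_1, \ldots, H_n)$ in a workable form. Each minimal vertex cover $C$ admits a unique decomposition $C = A \cup D_1 \cup \cdots \cup D_n$, where $A \subseteq V(G)$ covers $E(G)$, and for each $i$: if $x_i \in A$ then $D_i$ is a minimal vertex cover of $H_i$, while if $x_i \notin A$ then $D_i = V(H_i)$; moreover, if $x_i \in A$ with $D_i = V(H_i)$, then $A \setminus \{x_i\}$ must fail to cover $E(G)$. Hence the cover monomial factors as $u_C = u_A \cdot \prod_{i=1}^{n} u_{D_i}$. I would then define the $y$-order $<'$ on $K[y_1, \ldots, y_q]$ as the lex order induced by ordering the cover monomials $u_C$ lexicographically under the vertex order in the statement, and combine it with the corresponding lex $<_x$ on $S$ via (\ref{monomialorder}) to get the monomial order $<$ on $T$.

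Second, I would assemble a candidate Gr\"obner basis $\Gc$ of $J_{G(H_1, \ldots, H_n)}$ by combining three families. The first family consists, for each $i$, of lifts of the Gr\"obner basis elements of $J_{H_i}$: given $g \in J_{H_i}$ with initial term $v \cdot w$ (where $v$ is either $1$ or a single $z^{(i)}$-variable and $w$ is a monomial in the $H_i$-side $y$-variables), fix arbitrary compatible background data $(A, (D_j)_{j \neq i})$ and reinterpret each $H_i$-side $y$-variable as the $y$-variable of $G(H_1, \ldots, H_n)$ corresponding to the resulting global cover; this yields a binomial in $J_{G(H_1, \ldots, H_n)}$ whose initial term retains the form $v \cdot (\text{$y$-monomial})$. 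The second family consists of \emph{switching binomials} comparing covers with $x_i \in A$ against covers with $x_i \notin A$ when both options are admissible. The third family consists of toric binomials at the $G$-level encoding the syzygies among admissible $A$'s. Since the monomial order places all $z$-variables above all $x$-variables and the induced $y$-order prefers covers with larger $z$-content, the initial term of any binomial in the second or third family involves at most a single $x_i$ from $S$.

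The main obstacle is verifying that $\Gc$ is indeed a Gr\"obner basis, which amounts to checking via Buchberger's criterion that all $S$-polynomials reduce to zero modulo $\Gc$. The delicate cases are the $S$-pairs between lifts coming from different indices $i$, which commute cleanly and reduce quickly, and, more critically, the $S$-pairs between lifts and switching binomials; here one must unwind carefully how a change of $A$ interacts with a change of a single $D_i$, exploiting both the $x$-condition on each $J_{H_i}$ and the lex preference built into $<'$. Once $\Gc$ is established as a Gr\"obner basis, the $x$-condition for $J_{G(H_1, \ldots, H_n)}$ follows immediately from the description of the initial terms of the three families above.
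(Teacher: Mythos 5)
Your outline identifies the right ingredients (the description of the minimal vertex covers of $G(H_1,\ldots,H_n)$ in terms of a cover of $G$ together with covers of the $H_i$, the binomials $x_iy_e-\prod z^{(i)}_{j'}y_f$ that ``switch'' $x_i$ into a cover, and the lifts of relations from the $J_{H_i}$), but as written it is a plan rather than a proof, and the part you yourself flag as ``the main obstacle'' --- verifying via Buchberger's criterion that your candidate set $\Gc$ is a Gr\"obner basis --- is exactly the entire content of the theorem and is not carried out. Two concrete problems. First, your third family, ``toric binomials at the $G$-level encoding the syzygies among admissible $A$'s,'' is not pinned down: to run Buchberger you would need an explicit Gr\"obner basis of the purely $y$-part of $J_{G(H_1,\ldots,H_n)}$, which is not available in general and, crucially, is not needed for the $x$-condition (those generators contain no $S$-variables, so they are harmless whatever they are). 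Second, the $S$-pair reductions between lifts with different ``background data'' and switching binomials are a genuinely delicate combinatorial verification; asserting that they ``reduce quickly'' or must be ``unwound carefully'' does not establish it, and the well-definedness of the lifts themselves (that the lifted binomial lies in $J_{G(H_1,\ldots,H_n)}$ with the expected initial term for every choice of background) also needs an argument.

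The paper avoids Buchberger entirely. It takes $\Ac$ to be the set of monomials $z^{(i)}_jy_{\zeta_1}\cdots y_{\zeta_a}$ and $x_iy_e$ arising as above, takes $\Bc$ to be whatever pure $y$-monomials occur among the minimal generators of the \emph{true} initial ideal (so no description of the $y$-level syzygies is ever required), and proves $\ini_{<^\sharp}(J_{G(H_1,\ldots,H_n)})=(\Ac,\Bc)$ by the standard toric criterion: two distinct monomials outside $(\Ac,\Bc)$ cannot have the same image under $\pi$. The verification proceeds by first killing the $x$-part of such a monomial (a degree count in $x_i$ forces some $y_a\mid u_y$ with $x_i\notin C_a$, so $x_iy_a\in\Ac$), then killing the $z$-part by specializing all variables outside $V(H_i)$ to $1$ and invoking the $x$-condition for $J_{H_i}$. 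This injectivity argument is what makes the proof short; if you want to salvage your approach you should either switch to this criterion or actually perform the $S$-pair analysis, which I expect to be substantially harder.
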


\begin{proof} Let $S = K[x_1, \ldots, x_n, z^{(1)}_1, \ldots, z^{(n)}_{r_n} ]$ denote the polynomial ring in $n + r_1 + \cdots + r_n$ variables over a field $K$.  Let $C^{(i)}_1, \ldots, C^{(i)}_{s_i}$ denote the minimal vertex covers of $H_i$.  Given a vertex cover $C$ of $G$, we introduce a subset $C' = C \cup C^{(i)} \cup \cdots \cup C^{(n)}$ of $V(G) \cup V(H_1) \cup \cdots \cup V(H_n)$ for which
\[
C^{(i)} = \left\{
\begin{array}{lll}
V(H_i) & \text{if} & x_i \not\in C, \\
C^{(i)}_j & \text{if} & x_i \in C,
\end{array}
\right.
\]
where $1 \leq j \leq s_i$ is arbitrary.  It follows that $C'$ is a minimal vertex cover of $G$ and that every minimal vertex cover of $G$ is of the form $C'$.  

Let $<_{\rm lex}$ denote the pure lexicographic order on $S$ with respect to the above ordering $z^{(1)}_1 > \cdots > x_n$.  Let $C_1, \ldots, C_q$ denote the minimal vertex covers of $G(H_1, \ldots, H_n)$ and suppose that $u_{C_q} <_{\rm lex} \cdots <_{\rm lex} u_{C_1}$.  

Let $T_i = K[z^{(i)}_1, \ldots, z^{(i)}_{r_i}, y^{(i)}_1, \ldots, y^{(i)}_{s_i}]$ denote the polynomial ring in $r_i + s_i$ variables over $K$ and $J_{H_i} \subset T_i$ the toric ideal of $\Rc(I_{H_i})$ which is the kerner of $\pi_i : T_i \to \Rc(I_{H_i})$.  Let $w = z^{(i)}_j w''$ be a monomial belonging to the minimal system of monomial generators of ${\rm in}_{<^\sharp}(J_{H_i})$, where $w''$ is a monomial in $y^{(i)}_1, \ldots, y^{(i)}_{s_i}$.  Let $\pi_i(w) = z^{(i)}_j u_{C^{(i)}_{\xi_1}}t \cdots u_{C^{(i)}_{\xi_a}}t$.  Let $C_{\zeta_{i'}}$ be a minimal vertex covers of $G(H_1, \ldots, H_n)$ with $C_{\zeta_{i'}} \cap V(H_i) = C^{(i)}_{\xi_{i'}}$ for each $1 \leq i' \leq a$.  It follows that $z^{(i)}_j y_{\zeta_1} \cdots y_{\zeta_a}$ belongs to ${\rm in}_{<^\sharp}(J_{G(H_1, \ldots, H_n)}) \subset  T = S[y_1, \ldots, y_q]$.

Let $C$ be a minimal vertex cover of $G(H_1, \ldots, H_n)$ with $x_i \not\in C$ and  
\[
C' = ((C \cup \{x_i\}) \setminus V(H_i) ) \cup C^{(i)}_j,
\]   
where $1 \leq j \leq s_i$ is arbitrary.  Then $C'$ is a minimal vertex cover of $G(H_1, \ldots, H_n)$ with $u_{C'} <_{\rm lex} u_{C}$.  Let $C = C_e$ and $C' = C_f$.  Then $e < f$ and $$x_i y_e - \prod_{z^{(i)}_{j'} \not\in C^{(i)}_j} z^{(i)}_{j'} y_f \in J_{G(H_1, \ldots, H_n)}$$ whose initial monomial is $x_i y_e$.

Let $\Ac$ denote the set of monomials of the form either $z^{(i)}_j y_{\zeta_1} \cdots y_{\zeta_a}$ or $x_i y_e$ constructed above.  Let $\Bc$ denote the set of monomials in $y_1, \ldots, y_q$ belonging to the minimal system of monomial generators of ${\rm in}_{<^\sharp}(J_{G(H_1, \ldots, H_n)})$.

Let $(\Ac, \Bc)$ denote the monomial ideal of $T$ generated by $\Ac \cup \Bc$.  One claims that ${\rm in}_{<^\sharp}(J_{G(H_1, \ldots, H_n)}) = (\Ac, \Bc)$.  One must prove that, for monomials $u$ and $v$ of $T$ not belonging to $(\Ac, \Bc)$ with $u \neq v$, one has $\pi(u) \neq \pi(v)$.  Let $u = u_x u_z u_y$ and $v = v_x v_z v_y$ with $u \neq v$, where $u_x, v_x$ are monomials in $x_1, \ldots, x_n$, where $u_z, v_z$ are monomials in $z^{(i)}_j, 1 \leq i \leq n, 1 \leq j \leq r_i$, and where $u_y, v_y$ are monomials in $y_1, \ldots, y_s$.  Suppose that $u$ and $v$ are relatively prime and that $\pi(u) = \pi(v)$.  

Let, say, $u_x \neq 1$ and $x_i$ divide $u_x$.  Since $x_i$ does not divide $v_x$ and since $\deg u_y = \deg v_y$, it follows that there is $y_a$ which divids $u_y$ for which $x_i \not\in C_a$.  Hence $x_i y_a \in \Ac$, a contradiction.  Thus $u_x = v_x = 1$.  

Let $\pi(u) = u_z \cdot u_{C_{a_1}}t \cdots u_{C_{a_p}}t$ and $\pi(v) = v_z \cdot u_{C_{a'_1}}t \cdots u_{C_{a'_p}}t$.  Let, say, $u_z \neq 1$ and $z^{(i)}_j$ divide $u_z$.  In each of $\pi(u)$ and $\pi(v)$, replace each of $x_{1}, \ldots, x_n$ with $1$ and replace $z^{(i')}_j$ with $1$ for each $i' \neq i$ and for each $1 \leq j \leq r_i$.  Then $\pi(u)$ comes to $$u'_z \cdot u_{C^{(i)}_{c_1}}t \cdots u_{C^{(i)}_{c_{p'}}}t\,\left(\prod_{j=1}^{r_i}z_j^{(i)}\right)^{p-p'}$$ and $\pi(v)$ comes to $$v'_z \cdot u_{C^{(i)}_{c'_1}}t \cdots u_{C^{(i)}_{c'_{p'}}}t\,\left(\prod_{j=1}^{r_i}z_j^{(i)}\right)^{p-p'},$$ where each of $u'_z$ and $v'_z$ is a monomial in $z_{1}^{(i)}, \ldots, z_{r_i}^{(i)}$.  One has $p' > 0$ and $$u'_z \cdot y^{(i)}_{c_1} \cdots y^{(i)}_{c_{p'}} - v'_z \cdot y^{(i)}_{c'_1} \cdots y^{(i)}_{c'_{p'}} \in J_{H_i}.$$  Since $z^{(i)}_j$ divides $u'_z$, one has $u'_z \cdot y^{(i)}_{c_1} \cdots y^{(i)}_{c_{p'}} - v'_z \cdot y^{(i)}_{c'_1} \cdots y^{(i)}_{c'_{p'}} \neq 0$ and its initial monomial belongs to ${\rm in}_{<^\sharp}(J_{H_i})$.  Since $J_{H_i}$ satisfies the $x$-condition, it follows that either $u \in (\Ac, \Bc)$ or $v \in (\Ac, \Bc)$, a contradiction.  Thus $u_z = v_z = 1$.

Since $u = u_y, v = v_y, u - v \neq 0$ and $\pi(u) = \pi(v)$, one has either $u \in (\Bc)$ or $v \in (\Bc)$, a contradiction.
\, \, \, \, \, \, \, \, \, \,  
\, \, \, \, \, \, \, \, \, \,  
\, \, \, \, \, \, \, \, \, \, 
\, \, \, \, \, \, \, \,  
\end{proof}

\begin{Theorem}\label{construction}
Suppose that each $J_{H_i}$ has a quadratic Gr\"obner basis with respect to some monomial order.  Then each power of the vertex cover ideal $I_{G(H_1, \ldots, H_n)}$ possesses an order of linear quotients on its minimal monomial generating set, and hence it is componentwise linear.
\end{Theorem}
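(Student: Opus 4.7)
The plan is to reduce the theorem to Corollary \ref{extend}, applied to $I = I_{G(H_1, \ldots, H_n)}$ with the order $<^\sharp$: it suffices to show that $\ini_{<^\sharp}(J_{G(H_1,\ldots,H_n)})$ is generated by quadratic monomials. Since a quadratic Gr\"obner basis for $J_{H_i}$ (with respect to the order induced by $z^{(i)}_1 > \cdots > z^{(i)}_{r_i}$, which one may always assume after renumbering the $u_{C^{(i)}_j}$) is in particular the $x$-condition, Theorem \ref{graphjoin} applies and its proof yields the description $\ini_{<^\sharp}(J_{G(H_1, \ldots, H_n)}) = (\Ac, \Bc)$. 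It remains to show $\Ac \cup \Bc$ consists of quadratic monomials.

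The elements of $\Ac$ of the form $x_i y_e$ are quadratic by definition. The other elements of $\Ac$, namely the lifted $z^{(i)}_j y_{\zeta_1} \cdots y_{\zeta_a}$, arise from minimal generators $w = z^{(i)}_j w''$ of $\ini_{<^\sharp}(J_{H_i})$; under the quadratic hypothesis $\deg w = 2$, so $w'' = y^{(i)}_t$ for some $t$, forcing $a = 1$ and the lift to be quadratic.

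The main step, which I expect to be the principal obstacle, is to prove that every minimal pure-$y$ generator of $\ini_{<^\sharp}(J_{G(H_1,\ldots,H_n)})$ is quadratic. Take such an $m = y_{e_1} \cdots y_{e_k}$ and a binomial $m - m' \in J_{G(H_1,\ldots,H_n)}$. Decomposing each $C_{e_t} = (D_{e_t}, D^{(1)}_{e_t}, \ldots, D^{(n)}_{e_t})$ as in the proof of Theorem \ref{graphjoin}, the equality $\prod_t u_{C_{e_t}} = \prod_t u_{C_{e_t'}}$ splits into an equation in $K[x_1,\ldots,x_n]$ and $n$ independent equations in $K[z^{(i)}_1,\ldots,z^{(i)}_{r_i}]$. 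The $K[x]$-equation forces matching multiplicities of the $D_{e_t}$'s and $D_{e_t'}$'s containing each vertex $x_i$, so the $V(H_i)$-factors cancel in every $z^{(i)}$-equation, leaving a pure-$y^{(i)}$ relation in $J_{H_i}$. Since $m \neq m'$, at least one such residual $J_{H_i}$-relation is non-trivial, and by the quadratic Gr\"obner basis hypothesis on $J_{H_i}$ its initial monomial is divisible by some $y^{(i)}_a y^{(i)}_b \in \ini_{<^\sharp}(J_{H_i})$. Lifting back by selecting two of the covers $C_{e_t}, C_{e_{t'}}$ whose $i$-th components are $C^{(i)}_a$ and $C^{(i)}_b$ produces a quadratic binomial in $J_{G(H_1,\ldots,H_n)}$ whose initial monomial $y_{e_t} y_{e_{t'}}$ lies in $\ini_{<^\sharp}(J_{G(H_1,\ldots,H_n)})$ and divides $m$; minimality of $m$ forces $k = 2$.

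The key technical point within this argument is ensuring that the lifted initial monomial is indeed $y_{e_t} y_{e_{t'}}$: one must check that the relative $<^\sharp$-order on $y_{e_t}$ and $y_{e_{t'}}$ matches the underlying $<^\sharp$-order on $y^{(i)}_a$ and $y^{(i)}_b$ in $J_{H_i}$. This is immediate because $C_{e_t}$ and $C_{e_{t'}}$ differ only in their $i$-th components, so the lex comparison of $u_{C_{e_t}}$ and $u_{C_{e_{t'}}}$ reduces to the $z^{(i)}$-lex comparison of $u_{C^{(i)}_a}$ and $u_{C^{(i)}_b}$, and the $z^{(i)}$-variables occupy a contiguous block in the global ordering. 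With the quadraticness of $\ini_{<^\sharp}(J_{G(H_1,\ldots,H_n)})$ established, Corollary \ref{extend} yields that each power $I_{G(H_1,\ldots,H_n)}^k$ has linear quotients on its minimal monomial generating set and hence is componentwise linear.
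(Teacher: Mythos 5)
Your strategy is to upgrade Theorem \ref{graphjoin} to the statement that $\ini_{<^\sharp}(J_{G(H_1,\ldots,H_n)})$ is generated by quadrics and then invoke Corollary \ref{extend}. This is a genuinely different (and much stronger) route than the paper takes, and it has a gap that I do not think can be repaired: nothing in the hypotheses controls the base graph $G$, and your argument never accounts for the relations among the minimal vertex covers of $G$ itself. Concretely, the step ``since $m\neq m'$, at least one residual $J_{H_i}$-relation is non-trivial'' is false. A pure-$y$ binomial of $J_{G(H_1,\ldots,H_n)}$ can arise from a relation $\prod_t u_{D_{e_t}}=\prod_t u_{D_{e'_t}}$ among the minimal covers $D$ of $G$ alone: since the multiplicity of each $x_i$ matches on the two sides, one may choose identical multisets of $H_i$-covers on both sides, so every residual relation in every $J_{H_i}$ is trivial, yet the multisets $\{C_{e_t}\}$ and $\{C_{e'_t}\}$ differ whenever $\{D_{e_t}\}\neq\{D_{e'_t}\}$. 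Such relations among the covers of an arbitrary $G$ need not be consequences of quadrics, so the set $\Bc$ of pure-$y$ minimal generators of the initial ideal need not be quadratic, and Corollary \ref{extend} is not applicable. (A secondary issue: even when some residual $J_{H_i}$-relation is non-trivial, your lifting of $y^{(i)}_ay^{(i)}_b$ assumes $C_{e_t}$ and $C_{e_{t'}}$ differ only in their $i$-th components, which is unjustified; still, the quadratic binomial you want does exist by swapping $i$-th components, so this part is fixable. The main gap is the one above.)

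The paper's proof deliberately avoids claiming quadraticity of the initial ideal. It uses only the $x$-condition from Theorem \ref{graphjoin}, which by the proof of [HHM, Theorem 3.3] already yields, for each $k$, a generating set $h_1,\ldots,h_s$ of $I_{G(H_1,\ldots,H_n)}^k$ (indexed by standard monomials) admitting an order of linear quotients; the remaining content of the proof is to show this generating set is \emph{minimal}, and it is only there that the quadratic Gr\"obner basis hypothesis on the $J_{H_i}$ enters, via a reduction (stripping off $x$-variables and all $z^{(i')}$ with $i'\neq i$) to a relation in $J_{H_i}$ that contradicts minimality of the generators of $(I_{H_i})^{p'}$ established in [HHM, Theorem 3.6]. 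If you want to salvage your approach you would have to either impose a hypothesis on $G$ or follow the paper in decoupling ``linear quotients for the standard-monomial generators'' from ``minimality of that generating set.''
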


\begin{proof}
We keep the notation used in the proof of Theorem~\ref{graphjoin}.
Suppose that $J_{H_i}$ has a quadratic Gr\"obner basis with respect to $z^{(i)}_1 > \cdots > z^{(i)}_{r_i}$ for each $i$. 
By Theorem~\ref{graphjoin}, the ideal $J=J_{G(H_1, \ldots, H_n)}$ satisfies the $x$-condition with respect to the order $<$ induced by the orders $z^{(i)}_1 > \cdots > z^{(i)}_{r_i}$. Hence by the proof of \cite[Theorem 3.3]{HHM}, for any positive integer $k$, the ideal $(I_{G(H_1, \ldots, H_n)})^k$  has a system of generators $h_1,\ldots,h_s$,  each of them of the form $h_i=u_{C_{i_1}}\cdots u_{C_{i_k}}$, which possess an order of linear quotients $h_1<\cdots<h_s$ and such that $h_i^*=y_{i_1}\cdots y_{i_k}$ is a standard monomial of $T$ with respect to $<^\sharp$ and $J$.
We prove that $\{h_1,\ldots,h_s\}$ is the minimal generating set of monomials of $(I_{G(H_1, \ldots, H_n)})^k$. 

Suppose that $h_j=wh_i$ for some integers $i$ and $j$ and a monomial $w\in S$. We should show that $w=1$ and $i=j$.
Let $h_i=u_{C_{i_1}}\cdots u_{C_{i_k}}$ and $h_j=u_{C_{j_1}}\cdots u_{C_{j_k}}$.  If $w=1$ and $i\neq j$, then  $h_j^*-h_i^*\in J$. So either $h_i^*$ or $h_j^*$ belongs to $\ini_{<^\sharp}(J)$, a contradiction. Hence $i=j$ and we are done. 

Now, assume that $w\neq 1$. Let $w=w_xw_z$, where $w_x$ is a monomial in  $x_1, \ldots, x_n$ and $w_z$ is a monomial in $z^{(i)}_j, 1 \leq i \leq n, 1 \leq j \leq r_i$. Assume that $w_x\neq 1$ and $x_t|w$ for some integer $t$. So we have $x_t\notin C_{i_{a}}$
for some $1\leq a\leq k$. The set
\[
C' = ((C_{i_{a}} \cup \{x_t\}) \setminus V(H_t) ) \cup C^{(t)}_\ell,
\]   
where $1 \leq \ell \leq s_t$ is arbitrary, is a minimal vertex cover of $G(H_1, \ldots, H_n)$ with $u_{C'} <_{\rm lex} u_{C}$.  Let $C' = C_b$.  Then $i_a< b$ and 
$$x_t y_{i_{a}} - \prod_{z^{(t)}_{j'} \not\in C^{(t)}_\ell} z^{(t)}_{j'} y_b \in J_{G(H_1, \ldots, H_n)}$$ whose initial monomial is $x_t y_{i_{a}}$.
Since $x_t| w$ and $u_{C_{i_a}}|h_i$, we may write $wh_i=w'h'$, where $w'=(w/x_t)\prod_{z^{(t)}_{j'} \not\in C^{(t)}_\ell} z^{(t)}_{j'}$ and $h'=(h_i/u_{C_{i_a}})u_{C_b}$. Therefore
$h_j=w'h'$ with $\deg(w'_x)<\deg(w_x)$. Repeating this procedure we obtain
$h_j=uf$, where $u$ is a monomial with $u_x=1$ and $u_z\neq 1$ and $f=u_{C_{\ell_1}}\cdots u_{C_{\ell_k}}$ for some $\ell_1,\ldots,\ell_k$. Let $w''(h'')^*$ be the unique standard monomial in $T$ with respect to $<^\sharp$ and $J$ such that $\pi(w''(h'')^*)=f$, where $w''$ is a monomial in $S$.    

Then $h_j=uf=uw''h''$. Let $h''=u_{C_{s_1}}\cdots u_{C_{s_k}}$  If $x_t|w''$ for some integer $t$, then we have $x_t\notin C_{s_{a}}$ for some $a$ and then $x_t y_{s_{a}}\in \ini_{<^\sharp}(J)$. Since $x_ty_{s_{a}}$ divides $w''(h'')^*$, this implies that  $w''(h'')^*\in\ini_{<^\sharp}(J)$, a contradiction. Hence $w''_x=1$. Note that $(h'')^*$ is a standard monomial as well. So the equality $h_j=uf=u_zw''_zh''$ shows that we may reduce to the case that $w_x=1$. Hence $w=w_z$ with $w_z\neq 1$ and $h_j=w_zh_i$.    

Let, say, $z^{(i)}_j$ divides $w_z$.  In each of $h_j$, $h_i$ and $w$, replace each of $x_{1}, \ldots, x_n$ with $1$ and replace $z^{(i')}_j$ with $1$ for each $i' \neq i$ and for each $1 \leq j \leq r_{i'}$.  Then $h_j$ comes to $$ u_{C^{(i)}_{c_1}}t \cdots u_{C^{(i)}_{c_{p'}}}t\,\left(\prod_{j=1}^{r_i}z_j^{(i)}\right)^{p-p'}$$ and $h_i$ comes to $$u_{C^{(i)}_{c'_1}}t \cdots u_{C^{(i)}_{c'_{p'}}}t\,\left(\prod_{j=1}^{r_i}z_j^{(i)}\right)^{p-p'},$$ and $w$ comes to $v$, where $v$ is a monomial in $z_{1}^{(i)}, \ldots, z_{r_i}^{(i)}$. One has $p' > 0$ and 
\begin{equation}\label{quad}
y^{(i)}_{c_1} \cdots y^{(i)}_{c_{p'}} - v \cdot y^{(i)}_{c'_1} \cdots y^{(i)}_{c'_{p'}} \in J_{H_i}.
\end{equation}

Moreover, since $h_i^*$ and $h_j^*$ are standard monomials in $T$,  $y^{(i)}_{c_1} \cdots y^{(i)}_{c_{p'}}$ and $y^{(i)}_{c'_1} \cdots y^{(i)}_{c'_{p'}}$ are standard monomials in $T_i$, i.e., they do not belong to $\ini_<(J_{H_i})$. Since $J_{H_i}$ has a quadratic Gr\"obner basis with respect to $z^{(i)}_1 > \cdots > z^{(i)}_{r_i}$, as was shown in the proof of \cite[Theorem~3.6]{HHM}, the monomials $u_{C^{(i)}_{c_1}} \cdots u_{C^{(i)}_{c_{p'}}}$  and $u_{C^{(i)}_{c'_1}}\cdots u_{C^{(i)}_{c'_{p'}}}$ belong to the minimal set of monomial generators of $(I_{H_i})^{p'}$. This contradicts to equation (\ref{quad}).      
\end{proof}

The following corollaries are derived from Theorem~\ref{construction}.

\begin{Corollary}\label{cor1}
Let $G'=G(H_1,\ldots,H_n)$, where $G$ is any graph on $n$ vertices and each $H_i$ belongs to one of the following families of graphs:
\begin{itemize}
	\item  [{(a)}] Unmixed chordal graphs.
	\item  [{(b)}]  Cohen-Macaulay bipartite graphs.
	\item  [{(c)}]  Path graphs.
	\item  [{(d)}]  Cameron-Walker graphs whose bipartite graphs are complete graphs. 
\end{itemize}

Then any powers of the vertex cover ideal $I_{G'}$ possesses an order of linear quotients on its minimal monomial generating set and hence it is componentwise linear.	  
\end{Corollary}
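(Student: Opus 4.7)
The plan is to derive the corollary as a direct consequence of Theorem~\ref{construction}. By that theorem, it is enough to verify, for each of the four families (a)--(d), that whenever $H$ belongs to the family the toric ideal $J_H$ of the Rees algebra $\mathcal{R}(I_H)$ admits a quadratic Gr\"obner basis with respect to some monomial order on the corresponding polynomial ring; applying Theorem~\ref{construction} to $G'=G(H_1,\ldots,H_n)$ then yields an order of linear quotients on the minimal monomial generating set of every power $(I_{G'})^k$, and componentwise linearity follows from \cite[Theorem 8.2.15]{HH}.

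For case (a), when $H$ is an unmixed chordal graph, the required quadratic Gr\"obner basis of $J_H$ is recorded in \cite[Theorem 2.7]{HHO}. For case (b), for a Cohen-Macaulay bipartite graph $H$, it is furnished by \cite{HHbi}. For (c) path graphs and (d) Cameron-Walker graphs whose bipartite part is complete bipartite, the $x$-condition with quadratic initial ideal is established in \cite[Corollary 4.7]{HHM}. In every case, therefore, the hypothesis of Theorem~\ref{construction} is satisfied by each $H_i$, and the conclusion of the corollary is immediate.

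The only point requiring minor bookkeeping is the matching of variable orderings: the proof of Theorem~\ref{construction} uses a quadratic Gr\"obner basis of $J_{H_i}$ with respect to the specific lexicographic order $z^{(i)}_1 > \cdots > z^{(i)}_{r_i}$, whereas the cited sources produce the quadratic Gr\"obner basis with respect to their own choice of order. Since the construction $G(H_1,\ldots,H_n)$ is insensitive to the labelling of the vertices of each $H_i$, we may simply relabel the vertices of $H_i$ so that the order supplied by the reference takes the form $z^{(i)}_1 > \cdots > z^{(i)}_{r_i}$. With this relabelling, Theorem~\ref{construction} applies verbatim and finishes the proof. I do not anticipate a serious obstacle; the substantive work has been done in the references, and the corollary is essentially a compilation of those results through Theorem~\ref{construction}.
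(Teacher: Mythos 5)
Your proposal is correct and follows exactly the route of the paper's own proof: cite \cite[Theorem 2.7]{HHO}, \cite{HHbi} and \cite[Corollary 4.7]{HHM} for the quadratic Gr\"obner bases of the $J_{H_i}$ and then invoke Theorem~\ref{construction}. The extra remark about relabelling the vertices of each $H_i$ to match the ordering used in Theorem~\ref{construction} is a reasonable (and harmless) point of bookkeeping that the paper leaves implicit.
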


\begin{proof}
For any graph $H_i$ belonging to one of the families (a) to (d), the defining ideal of $\Rc(I_{H_i})$  has a quadratic Gr\"obner basis, see \cite[Theorem 2.7]{HHO}, \cite{HHbi} and
 \cite[Corollary 4.7]{HHM}. Hence the desired result follows from Theorem \ref{construction}. 
\end{proof}

Let $G$ be a graph with $V(G)=\{x_1,\ldots,x_n\}$. A {\em cone} on $G$ is a graph $G'$ with $V(G')=V(G)\cup\{y\}$ and 
$E(G')=E(G)\cup\{\{x_i,y\}:\ 1\leq i\leq n\}$, where $y$ is a new vertex. The vertex $y$ is called a {\em universal vertex} of $G'$ and $G'$ is called a {\em cone graph}.

A {\em friendship graph} $F_n$ is a planar graph with $2n + 1$ vertices and $3n$ edges.
It can be constructed by joining $n$ copies of the cycle graph $C_3$ with a common vertex. A {\em fan graph} $F_{1,n}$ is a cone on a path graph with $n$ vertices.

\begin{Corollary}\label{cor2}
Let $G$ be one of the following graphs
\begin{enumerate}
	\item [\em{(a)}] A Cameron-Walker graph with the bipartite partition $(X,Y)$ for which there exists one leaf attached to each vertex in $X$ and at least one pendant triangle attached to each vertex of $Y$.  
	\item [\em{(b)}] A cone graph with a universal vertex $x$ such that $G\setminus \{x\}$ is one of the graphs (a) to (d) in Corollary~\ref{cor1}. Examples of such cone graphs are the fan graphs  $F_{1,n}$, friendship graphs $F_n$ and star graphs. 
\end{enumerate}
Then all powers of the vertex cover ideal $I_{G}$ have linear quotients and hence are componentwise linear.
\end{Corollary}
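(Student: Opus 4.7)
The strategy is to realize each graph appearing in (a) and (b) as an instance of the construction $G_0(H_1,\ldots,H_n)$ in which every $H_i$ is already covered by Corollary~\ref{cor1}, and then to invoke Theorem~\ref{construction} directly.

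For (a), I would take $G_0$ to be the bipartite subgraph induced on $X\cup Y$. For each $v\in X$, let $H_v$ be the one-vertex (edgeless) graph, so that the construction attaches precisely one leaf at $v$. For each $v\in Y$ carrying $t_v\geq 1$ pendant triangles, let $H_v$ be the disjoint union of $t_v$ copies of $K_2$; then $G_0(H_{\bullet})$ joins $v$ to the two endpoints of each such edge, producing exactly $t_v$ pendant triangles at $v$. A direct inspection shows that $G_0(H_{\bullet})$ recovers the given Cameron-Walker graph. A disjoint union of copies of $K_2$ is both unmixed chordal and Cohen-Macaulay bipartite, so by Corollary~\ref{cor1}(a),(b) the defining ideal of $\mathcal{R}(I_{H_v})$ admits a quadratic Gr\"obner basis. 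Theorem~\ref{construction} then delivers (a).

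For (b), a cone graph $G$ with universal vertex $x$ is precisely $G_0(H)$ in which $G_0$ is the one-vertex graph $\{x\}$ and $H=G\setminus\{x\}$. By hypothesis $H$ lies in one of the four families of Corollary~\ref{cor1}, hence $J_H$ has a quadratic Gr\"obner basis and Theorem~\ref{construction} applies. The three named examples fit this scheme immediately: the fan graph $F_{1,n}$ is the cone on the path $P_n$ (Corollary~\ref{cor1}(c)); the friendship graph $F_n$ is the cone on $n$ disjoint copies of $K_2$, which is unmixed chordal (case (a)); and the star graph $K_{1,n}$ is the cone on $n$ isolated vertices, which is vacuously unmixed chordal.

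The only technical point I anticipate is the edgeless limiting case that appears in both parts (the leaf attachments in (a) and the star graph in (b)). There the cover ideal equals the whole ring, $\mathcal{R}(I_{H_v})$ is a polynomial ring, and $J_{H_v}=(0)$, so the quadratic Gr\"obner basis hypothesis is satisfied vacuously; a quick inspection of the proofs of Theorems~\ref{graphjoin} and \ref{construction} shows that the arguments degenerate harmlessly in this limit. Once this boundary case is recorded, the proof is just the identification of the graph structure with $G_0(H_{\bullet})$ followed by a citation of Theorem~\ref{construction}, and no further computation is required.
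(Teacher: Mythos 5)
Your proposal is correct and follows exactly the route the paper intends: the paper states that Corollary~\ref{cor2} is derived from Theorem~\ref{construction}, and your realizations of the Cameron--Walker graphs as $G_0(H_\bullet)$ with $H_v=K_1$ or disjoint unions of $K_2$'s (unmixed chordal), and of cone graphs as $K_1(H)$, are precisely the intended identifications. Your explicit handling of the degenerate case $J_{H_v}=(0)$ is a detail the paper leaves implicit, and it is treated correctly.
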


The following result is an immediate consequence of
Corollary~\ref{cor2} and the characterization of
Cohen-Macaulay Cameron-Walker graphs given in \cite[Theorem 1.3]{HHKO}.

\begin{Corollary}\label{cor3}
	Let $G$ be a Cohen-Macaulay Cameron-Walker graph. Then all power of the vertex cover ideal of $G$ have linear resolutions.
\end{Corollary}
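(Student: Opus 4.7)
The plan is to derive Corollary~\ref{cor3} directly from Corollary~\ref{cor2}(a) together with the structural classification of Cohen-Macaulay Cameron-Walker graphs given in \cite[Theorem~1.3]{HHKO}. First I would recall that classification: a Cameron-Walker graph $G$ with bipartition $(X,Y)$ is Cohen-Macaulay precisely when there is exactly one leaf attached to each vertex of $X$ and at least one pendant triangle attached to each vertex of $Y$. This is exactly the family described in Corollary~\ref{cor2}(a), so any Cohen-Macaulay Cameron-Walker graph is covered by that corollary.

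Applying Corollary~\ref{cor2}(a), every power $I_G^k$ has linear quotients on its minimal monomial generating set and in particular is componentwise linear. To promote componentwise linearity to a genuine linear resolution, the only missing ingredient is that $I_G^k$ is generated in a single degree. Since $G$ is Cohen-Macaulay, $G$ is unmixed, so all minimal vertex covers of $G$ have the same cardinality; hence $I_G$ is equigenerated, and therefore each power $I_G^k$ is equigenerated as well. A componentwise linear ideal that is generated in a single degree automatically has a linear resolution (this is immediate from the definition of componentwise linearity, since for such an ideal $I$ the only nonzero component $I_{\langle j \rangle}$ equals $I$ itself).

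The main obstacle is essentially bookkeeping: one has to match the combinatorial description produced by \cite[Theorem~1.3]{HHKO} with the hypothesis of Corollary~\ref{cor2}(a) and then notice that the Cohen-Macaulay hypothesis is exactly what supplies the equigeneration needed to upgrade componentwise linearity to a linear resolution. No further computation is required.
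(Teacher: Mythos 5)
Your proposal is correct and follows the same route the paper intends: the paper derives Corollary~\ref{cor3} precisely by matching the classification of Cohen--Macaulay Cameron--Walker graphs in \cite[Theorem 1.3]{HHKO} with the family in Corollary~\ref{cor2}(a). Your additional remark that unmixedness (from Cohen--Macaulayness) makes $I_G$, and hence each $I_G^k$, equigenerated --- so that componentwise linearity upgrades to a linear resolution --- is exactly the implicit step the paper relies on to state the conclusion as ``linear resolutions'' rather than merely ``componentwise linear.''
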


\end{document}